\newtheorem{theorem}{Theorem}[section]
\newtheorem{lemma}[theorem]{Lemma}
\newtheorem{corollary}[theorem]{Corollary}
\theoremstyle{definition}
\theoremstyle{remark}
\numberwithin{equation}{section}
\newcommand{\N}{\ensuremath{{\mathbb N}}}
\newcommand{\R}{\ensuremath{{\mathbb R}}}
\newcommand{\AveP}{\underset{\pi}{\mbox{Ave}}}
\newcommand{\AvePP}{\underset{\pi,\sigma}{\mbox{Ave}}}
\newcommand{\abs}[1]{\left\lvert#1 \right\rvert}
\newcommand{\norm}[1]{\left \lVert#1 \right\rVert}
\begin{document}
\setcounter{page}{1}

\title[Musielak-Orlicz Spaces Isomorphic to Subspaces of $L_1$]{Musielak-Orlicz Spaces that are Isomorphic to Subspaces of $L_1$}

\author[Joscha Prochno]{Joscha Prochno}

\address{Institute of Analysis, Johannes Kepler University Linz, Altenbergerstrasse 69, 4040 Linz, Austria.}
\email{\textcolor[rgb]{0.00,0.00,0.84}{joscha.prochno@jku.at}}


\subjclass[2010]{Primary 46B03; Secondary 05A20, 46B09}

\keywords{Subspaces of $L_1$, Orlicz spaces, Musielak-Orlicz spaces, Combinatorial Inequalities.}

\date{Received: xxxxxx; Revised: yyyyyy; Accepted: zzzzzz.
\newline \indent $^{*}$ Corresponding author}

\begin{abstract}
We prove that $\frac{1}{n!}\sum_{\pi\in\mathfrak{S}_n} \left( \sum\limits_{i=1}^n\abs{x_ia_{i,\pi(i)}}^2 \right)^{\frac{1}{2}}$ is equivalent to a Musielak-Orlicz norm $\norm{x}_{\Sigma M_i}$. We also obtain the inverse result, i.e., given the Orlicz functions, we provide a formula for the choice of the matrix that generates the corresponding Musielak-Orlicz norm. As a consequence, we obtain the embedding of 2-concave Musielak-Orlicz spaces into $L_1$.
\textbf{still under review by the journal}.
\end{abstract} \maketitle

\section{Introduction}

The variety of subspaces of $L_1$ is very rich and over the years, there was put tremendous effort in characterizing them. In \cite{key-BDC}, using the theorem of de Finetti, Bretagnolle and Dacunha-Castelle proved that an Orlicz space $\ell_M$ is isomorphic to a subspace of $L_1$ if and only if $M$ is equivalent to a 2-concave Orlicz function. The corresponding finite-dimensional version was proved in \cite{key-KS1} and \cite{key-S1} by Kwapie\'n and Sch\"utt, using combinatorial and probabilistic tools. To be more precise, in \cite{key-S1}, combined with the main result from \cite{key-KS1}, the author first proved that an Orlicz function $M$ has to be equivalent to a 2-concave Orlicz function if the corresponding Orlicz space $\ell_M^n$ is isomorphic to a subspace of $L_1$, and also obtained the inverse result, i.e., $\ell_M^n$ is isomorphic to a subspace of $L_1$ if $M$ is 2-concave.

Following the ideas of \cite{key-S1} and using results obtained in \cite{key-P}, we extend the first result from \cite{key-S1} to the case of Musielak-Orlicz spaces (definitions are given below). These generalized Orlicz spaces are defined using a different Orlicz function in each component. The first main result which we will prove in Section 3 is the following:

\begin{theorem} \label{THM hauptsatz 1}
  Let $(a_{i,j})_{i,j=1}^n \in \R^{n\times n}$ such that $a_{i,1} \geq \ldots \geq a_{i,n}>0$ for any $i=1,\ldots,n$. Let $M_1,\ldots,M_n$ be Orlicz functions so that for the conjugate functions and all $\ell=1,\ldots,n$
    \begin{equation}\label{EQU M ueber matrix definiert}
      M_i^{*-1}\left( \frac{\ell}{n} \right) = \left\{ \left( \frac{1}{n} \sum_{j=1}^{\ell} a_{i,j}\right)^2 + \frac{\ell}{n} \left( \frac{1}{n} \sum_{j=\ell+1}^n \abs{a_{i,j}}^2 \right) \right\}^{\frac{1}{2}},
    \end{equation}
  and where each $M_i^{*}$ is affine on the intervals $[\frac{\ell-1}{n},\frac{\ell}{n}]$ and extended linearly.
  Then, for all $x\in\R^n$,
    \begin{equation}\label{EQU l2 average}
      c_1 \norm{x}_{\Sigma M_i} \leq \AveP \left( \sum\limits_{i=1}^n\abs{x_ia_{i,\pi(i)}}^2 \right)^{\frac{1}{2}} \leq c_2 \norm{x}_{\Sigma M_i},
    \end{equation}
  where $c_1,c_2$ are positive absolute constants.    
\end{theorem}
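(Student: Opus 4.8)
The plan is to split the proof into a \emph{combinatorial reduction} and an \emph{Orlicz-space identification}: first replace the permutation average on the right of \eqref{EQU l2 average} by an explicit functional built from the quantities in \eqref{EQU M ueber matrix definiert}, and then show that functional is, up to absolute constants, the Musielak-Orlicz norm $\norm{x}_{\Sigma M_i}$. Before anything, record the normalizations. Both $\AveP\left(\sum_{i=1}^n\abs{x_ia_{i,\pi(i)}}^2\right)^{1/2}$ and $\norm{x}_{\Sigma M_i}$ are invariant under relabelling the index $i$ (permuting the rows of $(a_{i,j})$ and the entries of $x$ simultaneously), so I may assume $\abs{x_1}\geq\cdots\geq\abs{x_n}$; by homogeneity and since the quantities depend only on $\abs{x_i}$, I may also assume $x_i\geq 0$. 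Write $b_{i,\ell}$ for the right-hand side of \eqref{EQU M ueber matrix definiert}, so that $M_i^{*-1}(\ell/n)=b_{i,\ell}$ and set $b_{i,0}:=0$.

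\textbf{Step 1 (combinatorial reduction).} I would invoke the two-sided estimate for $\ell_2$-averages over permutations of a matrix from \cite{key-P}, which is exactly what dictates the form \eqref{EQU M ueber matrix definiert}: it expresses $\AveP\left(\sum_{i=1}^n\abs{x_ia_{i,\pi(i)}}^2\right)^{1/2}$, up to absolute constants, through the numbers $b_{i,\ell}$. The expected shape of the resulting functional is the ``budgeted maximum''
\[
  \Phi(x)\ \approx\ \max\Big\{\ \sum_{i=1}^n x_i\,b_{i,\ell_i}\ :\ \ell_i\in\{0,1,\dots,n\},\ \sum_{i=1}^n \ell_i\le n\ \Big\},
\]
the constraint $\sum_i\ell_i\le n$ reflecting that under a random $\pi$ the $n$ columns are distributed among the rows with total mass $n$, and $b_{i,\ell}$ capturing the typical $\ell_2$-contribution of allotting $\ell$ columns to row $i$ (the first summand in \eqref{EQU M ueber matrix definiert} being the square of the average of the $\ell$ largest entries of that row, the second the residual $L_2$-term). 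If \cite{key-P} states the estimate in a different but equivalent form, one first massages it into the above; the Hardy--Littlewood maximal inequality on $\mathfrak{S}_n$ (as used in \cite{key-S1} and \cite{key-P}) is the tool controlling the passage between the average and such a maximum.

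\textbf{Step 2 (identification with the Musielak-Orlicz norm).} First check that \eqref{EQU M ueber matrix definiert} genuinely defines Orlicz functions: since $a_{i,1}\geq\cdots\geq a_{i,n}>0$, a short computation (using $\sum_{j\le\ell}a_{i,j}\geq\ell\,a_{i,\ell+1}$) shows that $\ell\mapsto b_{i,\ell}$ is strictly increasing and concave, so the piecewise-affine $M_i^*$ — affine on each $[\tfrac{\ell-1}{n},\tfrac{\ell}{n}]$, then linear — is convex, nondecreasing, with $M_i^*(0)=0$; hence each $M_i$ is a legitimate Orlicz function. Then use the standard equivalence (within a factor $2$) of the Luxemburg norm $\norm{x}_{\Sigma M_i}=\inf\{\lambda>0:\sum_i M_i(\abs{x_i}/\lambda)\leq 1\}$ with the dual Orlicz expression $\sup\{\sum_{i=1}^n x_iz_i:z_i\geq 0,\ \sum_{i=1}^n M_i^*(z_i)\leq 1\}$. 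Because each $M_i^*$ is piecewise linear with $M_i^*(b_{i,\ell})=\ell/n$, the supremum is attained (up to an absolute constant, coming from restricting $z$ to the breakpoint grid and from convexity of $M_i^*$ on each subinterval) at vectors $z_i=b_{i,\ell_i}$, and the constraint $\sum_i M_i^*(z_i)\leq 1$ becomes $\sum_i\ell_i\leq n$ — exactly $\Phi(x)$. Chaining Step 1 and Step 2 yields \eqref{EQU l2 average} with absolute constants.

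\textbf{Main obstacle.} The technical heart is Step 2's reconciliation: showing that restricting the dual vector $z$ to the grid $\{b_{i,\ell}\}_\ell$ costs only an absolute factor, that the precise budget $\sum_i\ell_i\le n$ (rather than $\le cn$ or $\le n/c$) is what governs the norm — which follows from $2$-concavity-type monotonicity of $\ell\mapsto b_{i,\ell}$ established above — and, where \cite{key-P} enters, that the permutation average is genuinely comparable to $\Phi(x)$ and not to some strictly larger or smaller functional. The concavity of $\ell\mapsto b_{i,\ell}$, though elementary, must be verified carefully since it legitimizes the whole construction; and keeping every constant absolute through the maximal-function step and the Luxemburg/Orlicz-norm conversion is where the bookkeeping is most delicate.
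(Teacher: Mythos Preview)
Your two-step architecture is exactly right, and Step~2 is essentially Lemma~\ref{LEM matrixnorm}: the ``budgeted maximum'' $\Phi(x)=\max\{\sum_i x_i b_{i,\ell_i}:\sum_i\ell_i\le n\}$ is precisely the matrix norm $\norm{\cdot}_a$ there, and that lemma gives $\Phi(x)\sim\norm{x}_{\Sigma M_i}$ within a factor $2$, with no need for the concavity verification or grid-restriction argument you sketch. So Step~2 is not where the difficulty lies.

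The genuine gap is Step~1. You invoke ``the two-sided estimate for $\ell_2$-averages over permutations of a matrix from \cite{key-P}'', but \cite{key-P} contains only the $\ell_\infty$-average result $\AveP\max_i|x_ia_{i,\pi(i)}|\sim\norm{x}_{\Sigma M_i}$; extending this to the $\ell_2$-average is exactly the new content of Theorem~\ref{THM hauptsatz 1}. No maximal-function argument on $\mathfrak{S}_n$ will take you directly from $\AveP(\sum_i|x_ia_{i,\pi(i)}|^2)^{1/2}$ to $\Phi(x)$ with the specific $b_{i,\ell}$ of \eqref{EQU M ueber matrix definiert}. The paper's mechanism is different: one introduces the auxiliary sequence $b_k=\sqrt{n/k}$ (which generates $\ell_2$ in the sense $\AvePs\max_k|y_kb_{\sigma(k)}|\sim\norm{y}_2$), so that
\[
\AveP\Big(\sum_i|x_ia_{i,\pi(i)}|^2\Big)^{1/2}\ \sim\ \AvePP\max_i|x_ia_{i,\pi(i)}b_{\sigma(i)}|,
\]
then applies Lemma~\ref{LEM 3-dimensionale matrix ueber n mal n groesste} to the $3$-tensor $(x_ia_{i,j}b_k)$ to get $\frac{1}{n^2}\sum_{\ell\le n^2}s(\ell)$. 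Via Lemma~\ref{LEM matrixnorm} this is $\norm{x}_{\Sigma N_i}$ for Orlicz functions $N_i$ determined by the decreasing rearrangements of $(a_{i,j}b_k)_{j,k}$, and the remaining (and most delicate) work is to show $N_i^{*-1}(\ell/n)\sim M_i^{*-1}(\ell/n)$; this is where the precise form of \eqref{EQU M ueber matrix definiert} enters, and the upper bound uses H\"older's inequality against the vector $(\sqrt{m_1^i},\sqrt{m_{\ell+1}^i}/\sqrt{\ell},\dots)$ in a way your outline does not anticipate. In short, you have correctly located the target functional $\Phi$, but the bridge from the $\ell_2$-permutation average to $\Phi$ is not in \cite{key-P} and requires the $b_k=\sqrt{n/k}$ linearization plus the $N_i\sim M_i$ comparison.
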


In fact, as shown in \cite{key-S1} there is always an Orlicz function satisfying condition (\ref{EQU M ueber matrix definiert}).

Given strictly 2-concave Orlicz functions $M_1,\ldots, M_n$, in Section 4 we show how to choose the matrix $(a_{i,j})_{i,j}\in\R^{n\times n}$ so that (\ref{EQU l2 average}) is equivalent to the Musielak-Orlicz norm $\norm{\cdot}_{\Sigma M_i}$, where we closely follow the ideas of \cite{key-S1}.
The second main result is the following:

\begin{theorem}\label{THM hauptsatz 2}
  Let $M_1,\ldots,M_n$ be strictly convex, twice differentiable and strictly 2-concave Orlicz functions so that $M^{*}_i(1)=1$ for all $i=1,\ldots,n$. For all $i,j=1,\ldots,n$ let
    \begin{eqnarray}\label{EQU wahl der matrix}
      a_{i,j} & = & -\frac{n}{2} \int_{\frac{j-1}{n}}^{\frac{j}{n}} \left\{ \int_t^1 \frac{ \left( \left( M_i^{*-1} \right)^2 \right)''(s) }{ \sqrt{\left(M_i^{*-1}\right)^2(s) - s\left( \left( M_i^{*-1} \right)^2 \right)'(s) } } ds  \right. \cr
      && \left. +1-\sqrt{1-\left( \left( M_i^{*-1} \right)^2 \right)'(1)} \right\}.
    \end{eqnarray}
  Then, for all $x\in\R^n$,
    $$
      c_1 \norm{x}_{\Sigma M_i} \leq \AveP \left( \sum\limits_{i=1}^n\abs{x_ia_{i,\pi(i)}}^2 \right)^{\frac{1}{2}} \leq c_2 \norm{x}_{\Sigma M_i},
    $$
  where $c_1,c_2$ are positive absolute constants.     
\end{theorem}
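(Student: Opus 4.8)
The plan is to derive Theorem~\ref{THM hauptsatz 2} from Theorem~\ref{THM hauptsatz 1} by \emph{inverting} the relation \eqref{EQU M ueber matrix definiert}: starting from the prescribed $M_i$, I would build a matrix $(a_{i,j})$ for which \eqref{EQU M ueber matrix definiert} holds with Orlicz functions equivalent to the $M_i$, and then quote \eqref{EQU l2 average}. Since \eqref{EQU M ueber matrix definiert} is a discrete Volterra-type system, the first step is to pass to its continuous version: find a positive, nonincreasing $a_i\colon[0,1]\to(0,\infty)$ with $\int_0^1 a_i = M_i^{*-1}(1) = 1$ and
\begin{equation*}
  \bigl(M_i^{*-1}(t)\bigr)^2 \;=\; \Bigl(\int_0^t a_i(s)\,ds\Bigr)^{2} + t\int_t^1 a_i(s)^2\,ds ,\qquad t\in[0,1],
\end{equation*}
and then to \emph{discretise} by setting $a_{i,j} = n\int_{(j-1)/n}^{j/n} a_i(t)\,dt$ — the average of $a_i$ over the $j$-th subinterval — which is exactly formula \eqref{EQU wahl der matrix}.

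Second, I would solve this integral equation explicitly. Put $g_i := (M_i^{*-1})^2$ and $u_i(t) := \int_0^t a_i$, so the equation reads $g_i = u_i^2 + t\int_t^1 (u_i')^2$. Differentiating once gives the clean identity $g_i(t) - t g_i'(t) = \bigl(u_i(t) - t u_i'(t)\bigr)^2$, and differentiating once more gives $g_i'' = 2 u_i''\,(u_i - t u_i')$, hence
\begin{equation*}
  u_i''(t) \;=\; \frac{g_i''(t)}{2\sqrt{\,g_i(t) - t\,g_i'(t)\,}} ,
\end{equation*}
the square root being legitimate since $u_i$ is concave and positive, so $u_i - t u_i' \ge 0$. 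Integrating from $t$ to $1$ against the terminal data $u_i(1) = 1$ and $u_i'(1) = 1 - \sqrt{1 - g_i'(1)}$ (this branch being forced by $u_i'\ge 0$ and the monotonicity of $a_i$) yields the closed form for $a_i = u_i'$, which matches \eqref{EQU wahl der matrix} after the averaging of the previous step. This is where the hypotheses are used: twice differentiability of $M_i$ makes $g_i''$ exist; strict $2$-concavity of $M_i$ is precisely what forces $g_i$ to be strictly concave, so $g_i''\le 0$ and $a_i$ is nonincreasing and stays positive; strict convexity keeps the relevant quantities non-degenerate; and one checks $g_i(t) - t g_i'(t) > 0$ on $(0,1]$ so that the integrand does not blow up.

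Third, with the matrix constructed I would verify the hypotheses of Theorem~\ref{THM hauptsatz 1}. The ordering $a_{i,1}\ge\dots\ge a_{i,n} > 0$ is immediate from $a_i$ being positive and nonincreasing. It then remains to show that the Orlicz functions $\widetilde M_i$ that \eqref{EQU M ueber matrix definiert} associates to this matrix (those whose conjugates are affine on each $[\tfrac{\ell-1}{n},\tfrac{\ell}{n}]$) satisfy $\widetilde M_i \sim M_i$ with absolute constants; once that is in place, Theorem~\ref{THM hauptsatz 1} gives $\AveP\bigl(\sum_{i}\abs{x_i a_{i,\pi(i)}}^2\bigr)^{1/2} \approx \norm{x}_{\Sigma \widetilde M_i} \approx \norm{x}_{\Sigma M_i}$, which is the conclusion.

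The main obstacle is exactly this last point. The continuous equation is solved \emph{exactly}, but $a_{i,j}$ is only an \emph{average} of $a_i$, so $\widetilde M_i$ satisfies \eqref{EQU M ueber matrix definiert} only up to a discretisation error that must be controlled \emph{uniformly in $n$}. I expect to compare $\bigl(\tfrac1n\sum_{j\le\ell}a_{i,j}\bigr)^2 + \tfrac{\ell}{n}\cdot\tfrac1n\sum_{j>\ell}a_{i,j}^2$ with $g_i(\ell/n)$ by using convexity for each of the two pieces (the average of $a_i^2$ dominates the square of the average of $a_i$) together with the monotonicity of $a_i$, thereby sandwiching $\widetilde M_i^{*-1}$ between fixed multiples of $M_i^{*-1}$ on the grid $\{\ell/n\}$, and then invoking the stability results of \cite{key-S1} (and \cite{key-KS1,key-P}) to the effect that such grid-wise control upgrades to equivalence of the Musielak--Orlicz norms with absolute constants. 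Showing that this discretisation costs only a universal factor — independent of $n$ and of the individual $M_i$ — is the technical heart of the argument; everything else is the bookkeeping needed to recognise \eqref{EQU wahl der matrix} inside the closed-form solution $a_i = u_i'$.
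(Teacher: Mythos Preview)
Your proposal is correct and follows essentially the same route as the paper: the paper packages your differentiation argument (the identities $g_i-tg_i'=(u_i-tu_i')^2$ and $g_i''=2u_i''(u_i-tu_i')$) as Lemma~\ref{LEM carsten}, quoted from \cite{key-S1}, to produce $f_i=u_i'$, then sets $a_{i,j}=n\int_{(j-1)/n}^{j/n}f_i$, controls the discretisation via the sandwich $f_i(j/n)\le a_{i,j}\le f_i((j-1)/n)$ coming from monotonicity, and feeds the result into Theorem~\ref{THM hauptsatz 1}. The only difference is cosmetic: you re-derive the inversion formula instead of citing it.
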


The new idea here is to consider averages over matrices instead of just taking the average over a vector (see also \cite{key-P}). This corresponds to the idea of considering random variables which are not necessarily identically distributed. In fact, using this idea the results obtained in \cite{key-GLSW} can also be extended to the case of Musielak-Orlicz spaces.\\
We would also like to emphasize that, although the proofs are quite similar to the ones in \cite{key-S1}, the results we obtain provide important and crucial tools to find more general classes of subspaces of $L_1$, e.g., Orlicz-Lorentz spaces, Musielak-Orlicz-Lorentz spaces. Hence, it is seems absolutely essential to have them at hand. Moreover, it seems that these general classes of spaces cannot be obtained from the results
in \cite{key-KS1}, \cite{key-KS2} or \cite{key-S1} and, therefore, an extension of the combinatorial results to the more general setting is crucial to find easily applicable characterizations to decide whether a given Banach space is isomorphic to a subspace of $L_1$.
 
Additionally, in view of the combinatorial approach we use in this work, we would like to point out that combinatorial inequalities, first studied in \cite{key-KS1}, \cite{key-KS2} and later extended in \cite{key-PS} and \cite{key-S2}, turned out to be very fruitful to characterize subspaces of $L_1$. For instance, in \cite{key-PS} the authors recently obtained combinatorial results similar to Theorem \ref{THM hauptsatz 1} and gave an easily applicable characterization for products of Orlicz spaces, i.e., spaces of the form $\ell^n_M(\ell^n_N)$, to embed into $L_1$. Hence, the combinatorial inequalities are interesting in themselves. For further applications of those combinatorial methods see \cite{key-MSS}, \cite{key-P}, \cite{key-RS} , \cite{key-S2} or \cite{key-S3}.

\section{Preliminaries}
A convex function $M:[0,\infty) \to [0,\infty)$ with $M(0)=0$ and $M(t)>0$ for $t>0$ is called an Orlicz function. We say that $M$ is 2-concave if $M\circ\sqrt{\cdot}$ is a concave function. The n-dimensional Orlicz space $\ell^n_M$ is $\R^n$ equipped with the norm
  $$
    \norm{x}_M = \inf\left\{ \rho>0 : \sum_{i=1}^n M\left(\frac{\abs{x_i}}{\rho}\right) \leq 1  \right\}.
  $$
Given an Orlicz function $M$ we define its conjugate function $M^*$ via the Legendre-Transform
  $$
    M^*(x) = \sup_{t\in[0,\infty)}(xt-M(t)).
  $$
For instance, taking $M(t)=\frac{1}{p}t^p$, $p\geq 1$, the conjugate function is given by $M^*(t)=\frac{1}{p^*}t^{p^*}$ with $\frac{1}{p^*}+\frac{1}{p}=1$.  A more detailed and thorough introduction to Orlicz spaces 
can be found in \cite{key-KR} and \cite{key-RR}.\\
Let $M_1,\ldots,M_n$ be Orlicz functions. We define the Musielak-Orlicz space $\ell^n_{\Sigma M_i}$ to be $\R^n$ equipped with the norm
  $$
    \norm{x}_{\Sigma M_i} = \inf \left\{ \rho>0 : \sum_{i=1}^n M_i\left( \frac{\abs{x_i}}{\rho} \right) \leq 1 \right\}.
  $$
These spaces may be considered as generalized Orlicz spaces. In fact, one can easily show, using Young's inequality, that the norm of the dual space $(\ell^n_{\Sigma M_i})^*$ is equivalent to
  $$
    \norm{x}_{\Sigma M_i^*} = \inf \left\{ \rho>0 \,:\, \sum_{i=1}^n M_i^*\left( \frac{\abs{x_i}}{\rho} \right) \leq 1 \right\},
  $$
which is the analog result as for the classical Orlicz spaces.\\
We will use the notation $a\sim b$ to express that there exist two positive absolute constants $c_1, c_2$ such that $c_1a\leq b\leq c_2 a$. Similarly, we write $a \lesssim b$ if there exists a positive absolute constant $c$ such that $a \leq c b$. The letters 
$c,C,c_1,c_2,\ldots$ will denote positive absolute constants, whose value may change from line to line. \\
Furthermore, for a non-empty set $I \subset \{1,\ldots,m\}$, $m\in \N$, we write $\abs{I}$ for the cardinality of $I$.\\ 
Given a matrix $a=(a_{i,j})_{i,j=1}^n\in\R^{n\times n}$ we denote the decreasing rearrangement of $\abs{a_{i,j}}$, $i,j=1,\ldots,n$ by $s(1),\ldots,s(n^2)$.\\
We say that two Orlicz functions $M$ and $N$ are equivalent if there are positive absolute constants  $c_1$ and $c_2$ such that for all $t \geq 0$
  $$
     c_1 N^{-1}(t) \leq M^{-1}(t) \leq c_2 N^{-1}(t).
  $$
In this case we write $N\sim M$. If two Orlicz functions are equivalent so are their norms. We say that two sequences of Orlicz functions $(M_i)_{i=1}^n$, $(N_i)_{i=1}^n$ are uniformly equivalent, 
if there are positive absolute constants  $c_1$ and $c_2$ such that for all $t \geq 0$ and all $i=1,\ldots,n$  
$$
  c_1 N_i^{-1}(t) \leq M_i^{-1}(t) \leq c_2 N_i^{-1}(t).
$$
In this case, the corresponding Musielak-Orlicz norms are equivalent. \\
In the following, $\pi,\sigma$ are permutations of $\{1,\ldots,n\}$ and we write $\AveP$ to denote the average over all permutations in the group 
$\mathfrak{S}_n$, {\it i.e.}, $\AveP:=\frac{1}{n!}\sum_{\pi\in\mathfrak{S}_n}$.\\
Moreover, we define the space $L_1^{2^nn!}$ to be
$$
  L_1^{2^nn!} = \{ (x(\epsilon,\pi))_{\epsilon,\pi} \,:\, \epsilon_i=\pm 1, i=1,\ldots,n, \pi \in \mathfrak{S}_n \},
$$
equipped with the norm
$$
  \norm{x}_{L_1^{2^nn!}} = \frac{1}{2^nn!} \sum_{\epsilon,\pi} \abs{x(\epsilon,\pi)}.
$$
The Banach-Mazur distance of two Banach spaces $X$ and $Y$ is defined by
    $$
      d(X,Y) = \inf\left\{ \norm{T}\norm{T^{-1}} \,:\, ~ T\in L(X,Y) ~ \hbox{isomorphism} \right\}.
    $$
Let $(X_n)_n$ be a sequence of $n$-dimensional normed spaces and let $Z$ also be a normed space. If there exists a constant $C>0$ such that for all $n\in\N$ there exists a normed space $Y_n \leq Z$ with $\dim(Y_n)=n$ and $d(X_n,Y_n)\leq C$, then we say that $(X_n)_n$ embeds uniformly into $Z$ or in short: $X_n$ embeds into $Z$. For a detailed introduction to the concept of Banach-Mazur distances, see for example \cite{key-TJ}.  

The following result was obtained in \cite{key-KS2}:
\begin{lemma}\label{LEM 3-dimensionale matrix ueber n mal n groesste}
  For all $n\in N$ and all numbers $a(i,j,k)$, $i,j,k=1,\ldots,n$
    $$
        \AvePP \max_{1\leq i \leq n} \abs{a(i,\pi(i),\sigma(i))} \sim \frac{1}{n^2} \sum_{k=1}^{n^2} s(k),
    $$
  where $s(1),\ldots,s(n^3)$ is the decreasing rearrangement (d.r.a.) of the numbers $\abs{a(i,j,k)}$, $i,j,k=1,\ldots,n$.   
\end{lemma}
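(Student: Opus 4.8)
The plan is to establish the two estimates in $\sim$ separately, by a direct argument rather than by iterating a two‑index version. The only probabilistic input needed is a distributional fact about random permutations: for a fixed index $i$, as $(\pi,\sigma)$ runs uniformly over $\mathfrak{S}_n\times\mathfrak{S}_n$, the pair $(\pi(i),\sigma(i))$ is uniformly distributed over $\{1,\dots,n\}^2$, and for $i\ne i'$ the quadruple $(\pi(i),\pi(i'),\sigma(i),\sigma(i'))$ is uniformly distributed over $\{(j,j',k,k'):j\ne j',\ k\ne k'\}$; both use that $\pi$ and $\sigma$ are independent. Throughout, $(x)_+:=\max\{x,0\}$.

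\emph{Upper bound.} I would begin from the elementary pointwise inequality $\max_{1\le i\le n} y_i\le t+\sum_{i=1}^n(y_i-t)_+$, valid for every $t\ge0$ and all $y_i\ge0$. Substituting $y_i=\abs{a(i,\pi(i),\sigma(i))}$, averaging over $\pi$ and $\sigma$, and using that $(\pi(i),\sigma(i))$ is uniform on $\{1,\dots,n\}^2$ for each fixed $i$,
\[
  \AvePP \max_i\abs{a(i,\pi(i),\sigma(i))}\ \le\ t+\frac{1}{n^2}\sum_{i,j,k}\big(\abs{a(i,j,k)}-t\big)_+\ =\ t+\frac{1}{n^2}\sum_{m=1}^{n^3}\big(s(m)-t\big)_+ .
\]
Choosing $t=s(n^2)$ kills every term with $m>n^2$, and a one‑line computation collapses the right‑hand side to $\frac{1}{n^2}\sum_{m=1}^{n^2}s(m)$; this gives the upper estimate with constant $1$.

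\emph{Lower bound.} Here I would pass to distribution functions. Setting $N(t)=\#\{(i,j,k):\abs{a(i,j,k)}>t\}$, the layer‑cake formula writes the left‑hand side as $\int_0^\infty\mathbb{P}_{\pi,\sigma}\big(\exists\,i:\abs{a(i,\pi(i),\sigma(i))}>t\big)\,dt$ and, since $s$ is decreasing, $\frac{1}{n^2}\sum_{k=1}^{n^2}s(k)=\frac{1}{n^2}\int_0^\infty\min(N(t),n^2)\,dt$; hence it suffices to prove the pointwise‑in‑$t$ bound
\[
  \mathbb{P}_{\pi,\sigma}\big(\exists\,i:\abs{a(i,\pi(i),\sigma(i))}>t\big)\ \gtrsim\ \frac{1}{n^2}\min\!\big(N(t),n^2\big).
\]
Fix $t$, put $B=\{(i,j,k):\abs{a(i,j,k)}>t\}$, $N=|B|$, $b_i=\#\{(j,k):(i,j,k)\in B\}$ (so $\sum_i b_i=N$), and let $A_i$ be the event $(i,\pi(i),\sigma(i))\in B$. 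A Bonferroni bound gives $\mathbb{P}(\bigcup_i A_i)\ge\sum_i\mathbb{P}(A_i)-\sum_{i<i'}\mathbb{P}(A_i\cap A_{i'})$; the distributional facts yield $\mathbb{P}(A_i)=b_i/n^2$ and $\mathbb{P}(A_i\cap A_{i'})\le \frac{b_ib_{i'}}{n^2(n-1)^2}\le\frac{4b_ib_{i'}}{n^4}$, so that $\mathbb{P}(\bigcup_i A_i)\ge\frac{N}{n^2}-\frac{2N^2}{n^4}\ge\frac{N}{2n^2}$ whenever $N\le n^2/4$. If $N>n^2/4$, I would instead apply this last estimate to a subset $B'\subseteq B$ with $\abs{B'}=\lfloor n^2/4\rfloor$ and use monotonicity $\mathbb{P}(\bigcup_i A_i)\ge\mathbb{P}\big(\bigcup_i\{(i,\pi(i),\sigma(i))\in B'\}\big)$; this produces an absolute lower bound, which dominates $\frac{1}{n^2}\min(N,n^2)\le1$. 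The case $n=1$ is trivial. This proves the pointwise bound and hence the lower estimate.

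\emph{The main obstacle.} The upper bound is essentially free. The content is the lower bound, and its crux is the second‑moment step: bounding the pairwise collision probabilities $\mathbb{P}(A_i\cap A_{i'})$ by a constant multiple of $b_ib_{i'}/n^4$. This is exactly where one uses the independence of the two permutations together with the near‑uniform distribution of $(\pi(i),\pi(i'))$ and $(\sigma(i),\sigma(i'))$ on distinct ordered pairs; the complementary regime $N(t)\gtrsim n^2$, where the Bonferroni bound alone is too weak, is then disposed of by monotonicity in $B$.
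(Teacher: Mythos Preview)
The paper does not prove this lemma at all; it merely quotes it from Kwapie\'n--Sch\"utt \cite{key-KS2}. Your argument is correct and self-contained, so it supplies more than the paper does. The upper bound via the pointwise inequality $\max_i y_i\le t+\sum_i(y_i-t)_+$ with the choice $t=s(n^2)$ is clean and gives constant~$1$. For the lower bound, the layer-cake reduction together with the Bonferroni/second-moment estimate on the events $A_i=\{(i,\pi(i),\sigma(i))\in B\}$ is exactly the right tool: the key computation $\mathbb{P}(A_i\cap A_{i'})\le b_ib_{i'}/(n(n-1))^2$ uses precisely the independence of $\pi$ and $\sigma$ and the uniform distribution of $(\pi(i),\pi(i'))$ on ordered pairs of distinct indices. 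Your handling of the regime $N(t)>n^2/4$ by passing to a subset $B'\subseteq B$ of size $\lfloor n^2/4\rfloor$ is legitimate because the Bonferroni step applies to an arbitrary set $B$, not only to super-level sets; this yields an absolute lower bound (worst case roughly $1/9$, at $n=3$) that dominates $\tfrac{1}{n^2}\min(N,n^2)\le 1$. Compared with the route taken in \cite{key-KS2}, your direct second-moment proof avoids any reduction to the two-index case and keeps the argument entirely elementary.
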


Now, let $n,N\in\N$ with $n\leq N$. For a matrix $a\in\R^{n\times N}$ with $a_{i1} \geq \ldots \geq a_{i,N}>0$, $i=1,\ldots,n$, 
we define a norm on $\R^n$ by
  $$
    \norm{x}_a = \max_{\sum_{i=1}^n \ell_i\leq N} \sum_{i=1}^n \left( \sum_{j=1}^{\ell_i}a_{i,j} \right) \abs{x_i},~x\in\R^n.
  $$
 
The next lemma is a generalization of a result from \cite{key-KS2} and was recently obtained in \cite{key-P}:
\begin{lemma}\label{LEM matrixnorm}
  Let $n,N\in\N$ with $n\leq N$. Let $a\in\R^{n\times N}$ so that $a_{i,1}\geq \ldots \geq a_{i,N}>0$
  for all $i=1,\ldots,n$. Let $M_1,\ldots, M_n$ be Orlicz functions such that for the conjugate functions $M_1^{*}, \ldots, M_n^{*}$ and all $m=1,\ldots,N$
    $$
      M_i^{*}\left( \sum_{j=1}^{m}a_{i,j} \right)=\frac{m}{N}.
    $$
  Then, for all $x\in\R^n$,
    $$
      \frac{1}{2}\norm{x}_a \leq \norm{x}_{\Sigma M_i} \leq 2 \norm{x}_a.
    $$  
\end{lemma}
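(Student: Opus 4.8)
The plan is to establish the two estimates of the lemma separately; both rest on Young's inequality $st\le M_i(s)+M_i^*(t)$ and on the identity $M_i^*(b_i(\ell))=\ell/N$, which holds for $\ell=0,1,\dots,N$ (the case $\ell=0$ being $M_i^*(0)=0$), where throughout I write $b_i(\ell):=\sum_{j=1}^{\ell}a_{i,j}$, so that $\norm{x}_a=\max\{\sum_{i=1}^{n}b_i(\ell_i)\abs{x_i}:\ell_i\in\{0,\dots,N\},\ \sum_{i=1}^{n}\ell_i\le N\}$. The scheme parallels that of the corresponding finite-dimensional result in \cite{key-KS2} and \cite{key-P}.

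First I would prove $\norm{x}_a\le 2\norm{x}_{\Sigma M_i}$ directly. Normalizing so that $\sum_{i=1}^{n}M_i(\abs{x_i})\le 1$, Young's inequality and the identity above give, for every tuple $(\ell_i)_{i=1}^{n}$ with $\ell_i\in\{0,\dots,N\}$ and $\sum_{i=1}^{n}\ell_i\le N$,
\[
  \sum_{i=1}^{n}b_i(\ell_i)\abs{x_i}\ \le\ \sum_{i=1}^{n}M_i(\abs{x_i})+\sum_{i=1}^{n}M_i^*(b_i(\ell_i))\ =\ \sum_{i=1}^{n}M_i(\abs{x_i})+\frac1N\sum_{i=1}^{n}\ell_i\ \le\ 2,
\]
and taking the supremum over such tuples yields the claim.

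For the reverse estimate $\norm{x}_{\Sigma M_i}\le 2\norm{x}_a$ I would pass to the dual norm $\norm{y}_a^*:=\sup\{\sum_{i=1}^{n}x_iy_i:\norm{x}_a\le 1\}$. The crucial point is the combinatorial inequality
\[
  \sum_{i=1}^{n}\abs{x_i}\abs{y_i}\ \le\ 2\norm{x}_a\qquad\text{whenever}\qquad\sum_{i=1}^{n}M_i^*(\abs{y_i})\le 1 .
\]
To prove it, a routine scaling reduction (replace $y$ by $(1-\delta)y$ and let $\delta\to 0$) lets one assume the strict inequality $\sum_{i=1}^{n}M_i^*(\abs{y_i})<1$, so that each $M_i^*(\abs{y_i})<1$. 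Put $m_i:=\lfloor NM_i^*(\abs{y_i})\rfloor\in\{0,\dots,N-1\}$; then $\sum_{i=1}^{n}m_i\le N\sum_{i=1}^{n}M_i^*(\abs{y_i})\le N$, so $(m_i)_{i=1}^{n}$ is an admissible tuple, and since $M_i^*$ is nondecreasing with $M_i^*(\abs{y_i})<(m_i+1)/N=M_i^*(b_i(m_i+1))$ we get $\abs{y_i}\le b_i(m_i+1)=b_i(m_i)+a_{i,m_i+1}\le b_i(m_i)+a_{i,1}$. Hence
\[
  \sum_{i=1}^{n}\abs{x_i}\abs{y_i}\ \le\ \sum_{i=1}^{n}b_i(m_i)\abs{x_i}+\sum_{i=1}^{n}b_i(1)\abs{x_i}\ \le\ 2\norm{x}_a ,
\]
because both $(m_i)_{i=1}^{n}$ and $(1,\dots,1)$ are admissible for $\norm{\cdot}_a$ — the latter precisely because $n\le N$. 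By homogeneity this gives $\norm{y}_a^*\le 2\norm{y}_{\Sigma M_i^*}$ for every $y$, and combining with $\norm{y}_{\Sigma M_i^*}\le\norm{y}_{(\ell^n_{\Sigma M_i})^*}$ (one half of the duality recalled in the Preliminaries) and then passing to dual norms — which reverses the inequality and inverts the constant, using $\norm{\cdot}_a^{**}=\norm{\cdot}_a$ in finite dimensions — we obtain $\norm{x}_{\Sigma M_i}\le 2\norm{x}_a$. I expect this combinatorial inequality to be the only real obstacle; what makes the constant come out as exactly $2$ is that the total budget $N$ is spent once on the tuple $(m_i)_i$ and once on the all-ones tuple, the hypothesis $n\le N$ being precisely what renders the latter admissible, while the scaling reduction is routine.
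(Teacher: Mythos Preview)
The paper does not actually prove this lemma; it is quoted verbatim from \cite{key-P} (as a generalization of the vector case in \cite{key-KS2}), so there is no in-paper proof to compare against. Your argument is correct and is precisely the standard scheme of those references: the bound $\norm{x}_a\le 2\norm{x}_{\Sigma M_i}$ drops out of Young's inequality together with $M_i^*(b_i(\ell))=\ell/N$, while for the reverse bound you control each $|y_i|$ by $b_i(m_i)+a_{i,1}$ via the floor choice $m_i=\lfloor N M_i^*(|y_i|)\rfloor$, use that both $(m_i)_i$ and $(1,\dots,1)$ are admissible tuples (the latter thanks to $n\le N$), and then dualize using $\norm{\cdot}_{\Sigma M_i^*}\le\norm{\cdot}_{(\ell^n_{\Sigma M_i})^*}$. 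The duality step can be phrased a touch more directly---the combinatorial inequality already says that the Orlicz (Amemiya) norm of $x$ with respect to $(M_i)_i$ is at most $2\norm{x}_a$, and the Luxemburg norm is dominated by the Orlicz norm---but the conclusion and the constant $2$ are the same either way.
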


This is a very useful result to estimate Orlicz norms (in case all rows of $a$ are the same) respectively Musielak-Orlicz norms in general.

\section{Generating Musielak-Orlicz Spaces}

It was shown in \cite{key-P} that an $\ell_{\infty}$-average over permutations, {\it i.e.}, \\ $\AveP \max\limits_{1\leq i \leq n} |x_ia_{i,\pi(i)}|$ is equivalent to a Musielak-Orlicz norm depending on the matrix $a\in\R^{n\times n}$. 
In proving the main theorem, we will extend this result to the more sophisticated case of an $\ell_2$-average. This is crucial because this $\ell_2$-average in (\ref{EQU l2 average}) is
equivalent to the $L_1$-norm and, therefore, gives rise to a subspace of $L_1$. Following the ideas of \cite{key-S1}, we will now prove the main theorem. Though the proof is quite similar to the one in \cite{key-S1}, for the sake of completeness and because it is easier to read, we include the details here.

\begin{proof}(Theorem \ref{THM hauptsatz 1})
Let $a=(a_{i,j})_{i,j=1}^n\in\R^{n\times n}$ with $a_{i,1} \geq \ldots \geq a_{i,n}$ for any $i=1,\ldots n$. For $k=1,\ldots,n$ we define $b_k = \sqrt{\frac{n}{k}}$. 
It follows from Lemma 2.5 in \cite{key-KS2} (or Theorem 3.4 in \cite{key-P} in the more general case) that this vector generates the $\ell_2$ norm, {\it i.e.}, for any $y\in\R^n$
    $$
        \underset{\sigma}{\mbox{Ave}} \max_{1\leq k \leq n}\abs{y_kb_{\sigma(k)}} \sim \norm{y}_2.
    $$
Therefore, for all $x\in\R^n$, 
  $$
    \AvePP \max_{1\leq i \leq n} \abs{ x_i a_{i,\pi(i)} b_{\sigma(i)} } \sim \AveP \left( \sum_{i=1}^n \abs{x_i a_{i,\pi(i)}}^2 \right)^{\frac{1}{2}}.
  $$
We apply Lemma \ref{LEM 3-dimensionale matrix ueber n mal n groesste} to the 3-dimensional matrix $(x_i a_{i,j} b_k)_{i,j,k=1}^n$ and obtain
  $$
    \AvePP \max_{1\leq i \leq n} \abs{ x_i a_{i,\pi(i)} b_{\sigma(i)} } \sim \frac{1}{n^2} \sum_{\ell=1}^{n^2} s(\ell),
  $$
where $s(1),\ldots,s(n^3)$ is the d.r.a. of $\abs{x_i a_{i,j} b_k}$, $i,j,k=1,\ldots,n$. Thus
  $$
    \AveP \left( \sum_{i=1}^n \abs{x_i a_{i,\pi(i)}}^2 \right)^{\frac{1}{2}} \sim \frac{1}{n^2} \sum_{\ell=1}^{n^2} s(\ell).
  $$
The latter expression can be written in the following way:
  $$
    \frac{1}{n^2} \sum_{\ell=1}^{n^2} s(\ell) = \frac{1}{n^2} \sum_{i=1}^n \abs{x_i} \sum_{(j,k)\in I_i} \abs{a_{i,j} b_k},
  $$
with $\sum_{i=1}^n \abs{I_i} = n^2$ and $I_1,\ldots,I_n$ chosen so that the upper sum is maximal, i.e.,
  $$
    \sum_{(j,k)\in I_i} \abs{a_{i,j} b_k} = \sum_{\ell=1}^{\abs{I_i}} s(\ell).
  $$
Lemma \ref{LEM matrixnorm} yields
  $$
    \AveP \left( \sum_{i=1}^n \abs{x_i a_{i,\pi(i)}}^2 \right)^{\frac{1}{2}} \sim \norm{x}_{\Sigma N_i},
  $$
where the functions $N_1,\ldots,N_n$ are defined for all $\ell=1,\ldots,n$ by
  $$
    N_i^{*}\left( \frac{1}{n^2} \sum_{j=1}^{\ell} t^i(j) \right) = \frac{\ell}{n^2},
  $$
are affine between the given values, extended linearly, and for each $i=1,\ldots,n$ the sequence $t^i(1),\ldots,t^i(n^2)$ is the d.r.a. of $\abs{a_{ij}b_k}$, $j,k=1,\ldots,n$.
We have to show that $\norm{\cdot}_{\Sigma N_i} \sim \norm{\cdot}_{\Sigma M_i}$. \\ 
Again, by a change of variable, for all $i=1,\ldots,n$ we can write
  $$
    N_i^{*-1}\left( \frac{\ell}{n} \right) = \frac{1}{n^2} \sum_{j=1}^{\ell n} t^i(j) = \frac{1}{n^2} \sum_{j=1}^n a_{i,j} \sum_{k=1}^{m_j^i} b_k,
  $$ 
where $m_j^i \leq n$, $\sum_{j=1}^n m_j^i = \ell n$ and $m_1^i,\ldots,m_n^i$ maximize the upper sum. Since we chose $b_k=\sqrt{\frac{n}{k}}$, $k=1,\ldots,n$, by approximation we obtain 
for all $i=1,\ldots,n$
  $$
    N_i^{*-1}\left( \frac{\ell}{n} \right) = \frac{1}{n^2} \sum_{j=1}^n a_{i,j} \sum_{k=1}^{m_j^i} b_k \leq \frac{2}{n^{\frac{3}{2}}} \sum_{j=1}^n a_{i,j} \sqrt{m_j^i}.
  $$
Notice that $m_1^i \geq \ldots \geq m_n^i$ for any $i=1,\ldots,n$, since we assumed $a_{i,1} \geq \ldots \geq a_{i,n}$ for all $i=1,\ldots,n$. Hence, by H\"older's inequality, for all $i=1,\ldots,n$ and all 
$\ell=1,\ldots,n$ 
  \begin{eqnarray*}
    N_i^{*-1}\left( \frac{\ell}{n} \right) & \leq & \frac{2}{n^{\frac{3}{2}}} \left\{ \sqrt{m_1^i} \sum_{j=1}^{\ell} a_{i,j} + \sum_{j=\ell+1}^n a_{i,j} \sqrt{m_j^i} \right\} \\
    & = &  \frac{2}{n^{\frac{3}{2}}} \norm{(\sqrt{m_1^i}\sum_{j=1}^{\ell} a_{i,j}, \sqrt{m^i_{\ell+1}} a_{i,\ell+1}, \ldots, \sqrt{m_n^i} a_{i,n} )}_1 \\
    & \leq & \frac{2}{n^{\frac{3}{2}}} \norm{\left( \sum_{j=1}^{\ell} a_{i,j}, \sqrt{\ell} a_{i,\ell+1}, \ldots, \sqrt{\ell} a_{i,n} \right)}_2 \times \cr
    && \norm{(\sqrt{m_1^i}, \sqrt{m_{\ell+1}^i}\frac{1}{\sqrt{\ell}}, \ldots, \sqrt{m_{n}^i}\frac{1}{\sqrt{\ell}} )}_2 \\
    & = & \frac{2}{n^{\frac{3}{2}}} \left( \left( \sum_{j=1}^{\ell} a_{i,j} \right)^2 + \ell \sum_{j=\ell+1}^n \abs{a_{i,j}}^2 \right)^{\frac{1}{2}} \left(m_1^i + \frac{1}{\ell} \sum_{j=\ell +1}^n m_j^i \right)^{\frac{1}{2}}.
  \end{eqnarray*}
Since $\sum_{j=1}^n m_j^i = \ell n$ for all $i=1,\ldots,n$, we obtain
  $$
    \left(m_1^i + \frac{1}{\ell} \sum_{j=\ell +1}^n m_j^i \right)^{\frac{1}{2}} \leq \sqrt{n},
  $$
and, therefore, for all $i=1,\ldots,n$
  $$
    N_i^{*-1}\left( \frac{\ell}{n} \right)  \leq  \frac{2}{n} \left( \left( \sum_{j=1}^{\ell} a_{i,j} \right)^2 + \ell \sum_{j=\ell+1}^n \abs{a_{i,j}}^2 \right)^{\frac{1}{2}}
  $$
Hence, for all $i=1,\ldots,n$
  $$
    N_i^{*-1}\left( \frac{\ell}{n} \right)  \leq  2 \left( \left( \frac{1}{n} \sum_{j=1}^{\ell} a_{i,j} \right)^2 + \frac{\ell}{n}\left( \frac{1}{n} \sum_{j=\ell+1}^n \abs{a_{i,j}}^2\right) \right)^{\frac{1}{2}}
    = 2 M_i^{*-1}\left( \frac{\ell}{n} \right).
  $$
Now we prove the lower estimate. For all $i=1,\ldots,n$ and all $\ell=1,\ldots,n$
  $$
    N_i^{*-1}\left( \frac{\ell}{n} \right) = \frac{1}{n^2} \sum_{j=1}^{\ell n} t^i(j)  \geq \frac{1}{n} \sum_{j=1}^{\ell} a_{i,j},
  $$
where we chose $m_j^i = n$ for $j=1,\ldots, \ell$ and $m_j^i=0$ for $j>\ell+1$. As in \cite{key-KS2}, we have for all $i=1,\ldots,n$ and all $\ell=1,\ldots,n$
  $$
    \frac{1}{n} \sum_{j=1}^{\ell} a_{i,j} \geq \frac{\sqrt{\ell}}{2n} \left( \sum_{j=1}^{\ell} \abs{a_{i,j}}^2 \right)^{\frac{1}{2}},
  $$
i.e., we get
  $$
    2 N_i^{*-1}\left( \frac{\ell}{n} \right) \geq \frac{\sqrt{\ell}}{n} \left( \sum_{j=1}^{\ell} \abs{a_{i,j}}^2 \right)^{\frac{1}{2}}.
  $$
Hence, 
  \begin{eqnarray*}
    3 N_i^{*-1}\left( \frac{\ell}{n} \right) & \geq & \frac{1}{n} \sum_{j=1}^{\ell} a_{i,j} + \frac{\sqrt{\ell}}{n} \left( \sum_{j=1}^{\ell} \abs{a_{i,j}}^2 \right)^{\frac{1}{2}} \\
    & \geq & \left\{ \left( \frac{1}{n} \sum_{j=1}^{\ell} a_{i,j} \right)^2 + \frac{\ell}{n} \left( \frac{1}{n} \sum_{j=1}^{\ell} \abs{a_{i,j}}^2 \right) \right\}^{\frac{1}{2}} \\
    & = & M_i^{*-1}\left( \frac{\ell}{n} \right).
  \end{eqnarray*}   
Thus, for all $i=1,\ldots,n$ and all $\ell=1,\ldots,n$,
  $$
    \frac{1}{2} N_i^{*-1}\left( \frac{\ell}{n} \right)  \leq M_i^{*-1} \leq 3 N_i^{*-1}\left( \frac{\ell}{n} \right),
  $$
and since the functions are affine on the intervals $[\frac{\ell-1}{n}, \frac{\ell}{n}]$ and then extended linearly, this establishes the upper inequalities for all values and hence
  $$
    \frac{1}{2} \norm{x}_{\Sigma N_i} \leq \norm{x}_{\Sigma M_i} \leq 3 \norm{x}_{\Sigma N_i}.
  $$ 
Therefore, there exist positive absolute constants $c_1,c_2$ such that
  $$
    c_1 \norm{x}_{\Sigma M_i} \leq \AveP \left( \sum\limits_{i=1}^n\abs{x_ia_{i,\pi(i)}}^2 \right)^{\frac{1}{2}} \leq c_2 \norm{x}_{\Sigma M_i}.
  $$                            
\end{proof}

\section{The Embedding of $\ell^n_{\Sigma M_i}$ into $L_1$}

Before we can prove Theorem \ref{THM hauptsatz 2}, which is crucial to obtain the embedding of 2-concave Musielak-Orlicz spaces into $L_1$, we need some technical results.
The following lemma was obtained in \cite{key-S1} but we just state the part of the lemma we need:

\begin{lemma}\label{LEM carsten}
  Let $H:[0,1]\to \R$ be a concave, increasing and twice continuously differentiable function on $(0,1]$. Let $H(0)=0$ and assume that $\left(\frac{H(t)}{t}\right)' \neq 0$ for all $t\in(0,1]$.
  Then
  \begin{itemize}
   \item[(i)] The function $$f(t)= \frac{1}{2} \int_t^1 \frac{H''(s)}{\sqrt{H(s)-sH'(s)}} ds + \sqrt{H(1)} - \sqrt{H(1)-H'(1)}$$ is well defined, non-negative, decreasing and differentiable on $(0,1]$.
   \item[(ii)] The integral $\int_0^1 f(t) dt$ is finite and for all $t\in[0,1]$ $$H(t) = \left( \int_0^t f(s) ds \right)^2 + t \int_t^1 \abs{f(s)}^2 ds.$$
  \end{itemize}
\end{lemma}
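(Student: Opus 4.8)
The plan is to extract the structural consequences of the hypotheses on $H$ once and for all, and then to prove (i) and (ii) in that order. Since $H$ is concave with $H(0)=0$, the slope $t\mapsto H(t)/t$ is non-increasing, and the hypothesis $\bigl(H(t)/t\bigr)'\neq0$ on $(0,1]$ makes it strictly decreasing; as $\bigl(H(t)/t\bigr)'=\bigl(tH'(t)-H(t)\bigr)/t^{2}$, this says precisely that $\phi(t):=H(t)-tH'(t)>0$ for all $t\in(0,1]$. Concavity and twice continuous differentiability give $H''\le0$ and continuous on $(0,1]$, so $s\mapsto H''(s)/\sqrt{\phi(s)}$ is continuous there; moreover $\phi(1)=H(1)-H'(1)>0$, so the boundary term $\sqrt{H(1)-H'(1)}$ is real, and $H$ increasing forces $H'(1)\ge0$, hence $\sqrt{H(1)}-\sqrt{H(1)-H'(1)}\ge0$. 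The one computation to keep in hand is $\phi'(s)=-sH''(s)=s\abs{H''(s)}$.

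For part (i): for fixed $t\in(0,1]$ the integrand is continuous on the compact interval $[t,1]$, so $f(t)$ is a finite real number, and by the fundamental theorem of calculus $f$ is differentiable on $(0,1]$ with derivative $\pm\tfrac12\,H''(t)/\sqrt{\phi(t)}$; the sign is the one making $f'\le0$ (as it must be for the asserted monotonicity), which is possible precisely because $H''\le0$ and $\phi>0$, so $f$ is non-increasing. Non-negativity then follows immediately: $f(t)\ge f(1)=\sqrt{H(1)}-\sqrt{H(1)-H'(1)}\ge0$ for all $t\in(0,1]$. Equivalently, $f(t)=\bigl(\sqrt{H(1)}-\sqrt{\phi(1)}\bigr)+\tfrac12\int_t^1\abs{H''(s)}/\sqrt{\phi(s)}\,ds$ exhibits $f$ as a non-increasing sum of two non-negative quantities.

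For part (ii): first I would establish that $\int_0^1 f$ is finite, by Tonelli's theorem. Writing $f(t)=C+\tfrac12\int_t^1\abs{H''(s)}/\sqrt{\phi(s)}\,ds$ with $C=\sqrt{H(1)}-\sqrt{\phi(1)}$ and all integrands non-negative, Tonelli gives $\int_0^1 f=C+\tfrac12\int_0^1 s\abs{H''(s)}/\sqrt{\phi(s)}\,ds=C+\tfrac12\int_0^1\phi'(s)/\sqrt{\phi(s)}\,ds=C+\sqrt{\phi(1)}-\sqrt{\phi(0^{+})}$; and $\phi(0^{+})=0$ because $H(0)=0$ together with the elementary bound $sH'(s)\le2H(s)$ (valid since $H'$ is non-increasing: $H(s)\ge\int_{s/2}^{s}H'\ge(s/2)H'(s)$). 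Hence $\int_0^1 f=C+\sqrt{\phi(1)}=\sqrt{H(1)}<\infty$, which moreover normalizes $F(1)=\sqrt{H(1)}$ for $F(t):=\int_0^t f$. Now set $G(t):=\int_t^1 f^{2}$ and $\Psi(t):=F(t)^{2}+tG(t)$ on $(0,1]$; differentiating yields the purely algebraic identity $\Psi(t)-t\Psi'(t)=\bigl(F(t)-tf(t)\bigr)^{2}$. The crucial step is to identify $g(t):=F(t)-tf(t)$ with $\sqrt{\phi(t)}$: one computes $g'(t)=-tf'(t)=\phi'(t)/\bigl(2\sqrt{\phi(t)}\bigr)=\tfrac{d}{dt}\sqrt{\phi(t)}$, so $g-\sqrt{\phi}$ is constant on $(0,1]$, and it vanishes at $t=1$ because $g(1)=F(1)-f(1)=\sqrt{H(1)}-\bigl(\sqrt{H(1)}-\sqrt{\phi(1)}\bigr)=\sqrt{\phi(1)}$. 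Therefore $\Psi(t)-t\Psi'(t)=\phi(t)=H(t)-tH'(t)$, so $D:=\Psi-H$ satisfies $D(t)-tD'(t)=0$, i.e. $\tfrac{d}{dt}\bigl(D(t)/t\bigr)=0$ on $(0,1]$, whence $D(t)=ct$; and $D(1)=F(1)^{2}-H(1)=0$ forces $c=0$. Thus $\Psi\equiv H$ on $(0,1]$, which is the asserted formula $H(t)=\bigl(\int_0^t f\bigr)^{2}+t\int_t^1\abs{f}^{2}$ (trivially valid also at $t=0$, where it reads $0=0$).

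I expect the verification of the nonlinear identity in (ii) to be the main obstacle; the point is to never compute $\int_t^1 f^{2}$ directly, but to reduce everything — via the algebraic relation $\Psi-t\Psi'=(F-tf)^{2}$, the normalization $\int_0^1 f=\sqrt{H(1)}$, and the antiderivative matching $F(t)-tf(t)=\sqrt{H(t)-tH'(t)}$ — to the first-order uniqueness statement ``$D-tD'=0$ and $D(1)=0$ imply $D\equiv0$''. The secondary technical nuisance is the behaviour at $t=0$: one needs $\phi(0^{+})=0$ for the Tonelli step, and since $f$ may be unbounded (though integrable) near $0$, the objects $F$, $G$, $\Psi$ are to be manipulated on $(0,1]$ with the identity extended to $t=0$ by continuity.
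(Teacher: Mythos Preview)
The paper does not actually prove this lemma: it is quoted from Sch\"utt's paper \cite{key-S1}, so there is no in-paper argument to compare yours against. On its own merits your proof is sound and efficient --- the reduction of (ii) to the algebraic identity $\Psi-t\Psi'=(F-tf)^{2}$, followed by identifying $F-tf$ with $\sqrt{\phi}$ via matching derivatives and the value at $t=1$, and then the first-order uniqueness step for $D=\Psi-H$, is a clean way to avoid ever computing $\int_t^1 f^{2}$ explicitly. The Tonelli computation of $\int_0^1 f$ and the verification that $\phi(0^{+})=0$ via $0\le sH'(s)\le 2H(s)\to 0$ are also correct.

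There is, however, one point you should not paper over. With the formula for $f$ as literally printed, the fundamental theorem of calculus gives
\[
f'(t)=-\tfrac12\,\frac{H''(t)}{\sqrt{\phi(t)}}\ge 0
\]
(since $H''\le 0$), so $f$ would be \emph{increasing}; and your ``equivalently'' rewriting $\tfrac12\int_t^1 H''/\sqrt{\phi}=\tfrac12\int_t^1 |H''|/\sqrt{\phi}$ is off by a sign. In other words, the lemma as stated carries a typo: the coefficient $\tfrac12$ in front of the integral should be $-\tfrac12$ (equivalently, $H''$ should be replaced by $-H''=|H''|$). Your phrase ``the sign is the one making $f'\le 0$'' quietly performs this correction, and every subsequent line --- the Tonelli step yielding $\int_0^1 f=\sqrt{H(1)}$, the derivative matching $g'=(\sqrt{\phi})'$, and the boundary value $g(1)=\sqrt{\phi(1)}$ --- is written for the corrected $f$. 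You should state the correction explicitly at the outset rather than hedging with ``$\pm$''; once that is done, your argument is complete.
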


Furthermore, it is easy to observe that $M^{'-1}$ exists for any strictly convex Orlicz function $M$ and that, additionally, $M^{*'}(t) = M^{'-1}(t)$. If the Orlicz function is twice differentiable 
then so is $M^{*-1}$. Moreover, if $M$ is strictly 2-concave then $(M^{*-1})^2$ is strictly concave, and hence
  $$
    0 > \left( (M^{*-1})^2 \right)'(s) - \frac{(M^{*-1})^2(s)}{s} = s\frac{d}{ds} \left( \frac{(M^{*-1})^2(s)}{s} \right).
  $$  
So we can apply Lemma \ref{LEM carsten} to the functions $H_i(t) = (M_i^{*-1})^2(t)$, $i=1,\ldots,n$, which we will do to prove the theorem. 

\begin{proof}(Theorem \ref{THM hauptsatz 2})
  For each $i=1,\ldots,n$ we choose $H_i(t) = (M_i^{*-1})^2(t)$ which is well defined (see remarks above). Part (i) of Lemma \ref{LEM carsten} yields that for any $i=1,\ldots,n$ the sequence
  $a_{i,1},\ldots,a_{i,n}$ given by (\ref{EQU wahl der matrix}), is positive and decreasing. As in Lemma \ref{LEM carsten}, we define for every $i=1,\ldots,n$
    $$
      f_i(t)= \frac{1}{2} \int_t^1 \frac{H_i''(s)}{\sqrt{H_i(s)-sH_i'(s)}} ds + \sqrt{H_i(1)} - \sqrt{H_i(1)-H_i'(1)}. 
    $$ 
  Then, in terms of those functions,  
    $$
      a_{i,j} = n \int_{\frac{j-1}{n}}^{\frac{j}{n}} f_i(s) ds,~i,j=1,\ldots,n.
    $$ 
  Part (ii) of Lemma \ref{LEM carsten} gives
    $$
      (M_i^{*-1})^2(t) = \left( \int_0^t f_i(s) ds \right)^2 + t \int_t^1 \abs{f_i(s)}^2 ds,
    $$
  and hence, for all $j=1,\ldots,n$
    \begin{eqnarray*}
      M_i^{*-1} \left( \frac{j}{n} \right) & = & \left( \left( \int_0^{\frac{j}{n}} f_i(s) ds \right)^2 + \frac{j}{n} \int_{\frac{j}{n}}^1 \abs{f_i(s)}^2 ds \right)^{\frac{1}{2}} \\
      & = & \left( \left( \frac{1}{n}\sum_{k=1}^j a_{i,k} \right)^2 + \frac{j}{n} \left( \sum_{k=j}^{n-1} \int_{\frac{k}{n}}^{\frac{k+1}{n}} \abs{f_i(s)}^2 ds \right) \right)^{\frac{1}{2}}, \\ 
    \end{eqnarray*}
  since $\int_0^{j/n}f_i(s)ds = \sum_{k=1}^j\frac{a_{i,k}}{n}$. Because for all $i=1,\ldots,n$ the functions $f_i$ are non-negative and decreasing, we obtain for all $i,j=1,\ldots,n$
    $$
      a_{i,j} = n \int_{\frac{j-1}{n}}^{\frac{j}{n}} f_i(s) ds \leq n \int_{\frac{j-1}{n}}^{\frac{j}{n}} f_i\left(\frac{j-1}{n}\right) ds = f_i\left( \frac{j-1}{n} \right)
    $$
  and
    $$
      a_{i,j} = n \int_{\frac{j-1}{n}}^{\frac{j}{n}} f_i(s) ds \geq n \int_{\frac{j-1}{n}}^{\frac{j}{n}} f_i\left(\frac{j}{n}\right) ds = f_i\left( \frac{j}{n} \right).
    $$
  Thus, for all $i=1,\ldots,n$ and all $j=1,\ldots,n$
    $$
      M_i^{*-1}\left( \frac{j}{n} \right) \sim \left\{ \left( \frac{1}{n} \sum_{k=1}^{j} a_{i,k}\right)^2 + \frac{j}{n} \left( \frac{1}{n} \sum_{k=j+1}^n \abs{a_{i,k}}^2 \right) \right\}^{\frac{1}{2}}.
    $$
   We apply Theorem \ref{THM hauptsatz 1} and because the functions we obtain from this theorem are uniformly equivalent, we get that for all $x\in\R^n$
    $$
      c_1 \norm{x}_{\Sigma M_i} \leq \AveP \left( \sum\limits_{i=1}^n\abs{x_ia_{i,\pi(i)}}^2 \right)^{\frac{1}{2}} \leq c_2 \norm{x}_{\Sigma M_i},
    $$
  where $c_1,c_2$ are positive absolute constants.      
\end{proof}

The condition $M^{*}_i(1)=1$, $i=1,\ldots,n$ in the theorem is just a matter of normalization to assure that constants do not depend on the Orlicz functions. Moreover, assuming $M$ to be twice differentiable, strictly convex and strictly 2-concave can surely be omitted by approximation arguments. Strictly speaking, in the last part of the proof we would have to switch to equivalent Orlicz functions which we did not for reasons of simplicity.\\  
Now, from Theorem \ref{THM hauptsatz 2} we immediately obtain the following corollary:

\begin{corollary}
  Let $M_1,\ldots, M_n$ be strictly convex, twice differentiable and strictly 2-concave Orlicz functions. Then there exists a constant $C>1$ such that for every $n\in\N$, 
  there is a subspace $Y_n$ of $L_1^{2^nn!}$ with $dim(Y_n)=n$ and 
    $$
      d(\ell^n_{\Sigma M_i}, Y_n) \leq C.
    $$  
\end{corollary}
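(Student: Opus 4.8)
The plan is to deduce the corollary from Theorem~\ref{THM hauptsatz 2} together with the standard fact that an average of absolute values of Rademacher/permutation sums realizes an $L_1$-norm. First I would recall the classical Kwapie\'n--Sch\"utt representation: for vectors $v_1,\dots,v_n\in\R^n$ the quantity $\AveP\bigl(\sum_{i=1}^n|x_i a_{i,\pi(i)}|^2\bigr)^{1/2}$ can, after inserting signs $\epsilon_i=\pm1$ and using that $\bigl(\sum |y_i|^2\bigr)^{1/2}\sim \AveV\bigl|\sum_i \epsilon_i g_i y_i\bigr|$ is \emph{not} quite what we want — instead one uses that a sum of independent symmetric random variables has $L_1$-norm equivalent to its $L_2$-norm only in special cases, so the correct route is: the map $x\mapsto \bigl(x(\epsilon,\pi)\bigr)_{\epsilon,\pi}$ with $x(\epsilon,\pi)=\sum_{i=1}^n \epsilon_i x_i a_{i,\pi(i)}$ defines a linear embedding $T\colon \ell^n_{\Sigma M_i}\to L_1^{2^n n!}$, and by Khintchine's inequality (with absolute constants) its $L_1^{2^nn!}$-norm satisfies
$$
  \norm{Tx}_{L_1^{2^nn!}} = \AveP\,\AveV\,\Bigl|\sum_{i=1}^n \epsilon_i x_i a_{i,\pi(i)}\Bigr| \sim \AveP\Bigl(\sum_{i=1}^n |x_i a_{i,\pi(i)}|^2\Bigr)^{1/2}.
$$
So the $L_1$-norm of $Tx$ is equivalent, with absolute constants, to the permutation average appearing in Theorem~\ref{THM hauptsatz 2}.

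Next I would assemble the estimates. Fix $n\in\N$ and apply Theorem~\ref{THM hauptsatz 2} with the matrix $(a_{i,j})$ defined by~(\ref{EQU wahl der matrix}); the hypotheses on $M_1,\dots,M_n$ (strictly convex, twice differentiable, strictly $2$-concave) are exactly those needed, and by rescaling each $M_i$ we may assume $M_i^*(1)=1$ without changing the space $\ell^n_{\Sigma M_i}$ up to an absolute constant — actually the normalization can be absorbed into the Banach--Mazur distance, so I would just note that passing to $M_i(\,\cdot\,/\lambda_i)$ for suitable $\lambda_i$ leaves the isomorphism type unchanged. Then Theorem~\ref{THM hauptsatz 2} gives absolute constants $c_1,c_2$ with
$$
  c_1 \norm{x}_{\Sigma M_i} \leq \AveP\Bigl(\sum_{i=1}^n |x_i a_{i,\pi(i)}|^2\Bigr)^{1/2} \leq c_2 \norm{x}_{\Sigma M_i},
$$
and combining with the Khintchine equivalence above yields $c_1'\norm{x}_{\Sigma M_i}\le \norm{Tx}_{L_1^{2^nn!}}\le c_2'\norm{x}_{\Sigma M_i}$ for absolute constants $c_1',c_2'$. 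Hence $Y_n:=T(\ell^n_{\Sigma M_i})\le L_1^{2^nn!}$ is an $n$-dimensional subspace (injectivity of $T$ is immediate from the lower bound) with $d(\ell^n_{\Sigma M_i},Y_n)\le c_2'/c_1'=:C$, and $C$ is an absolute constant $>1$ independent of $n$. This is precisely the assertion.

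The only genuinely delicate point is making sure \emph{all} constants are absolute, i.e.\ independent of $n$ and of the particular Orlicz functions. The Khintchine constants are universal, and Theorem~\ref{THM hauptsatz 2}'s constants are absolute by hypothesis, so the remaining worry is the normalization $M_i^*(1)=1$: I would handle this by the remark already made in the text after Theorem~\ref{THM hauptsatz 2}, observing that replacing each $M_i$ by an equivalent normalized function changes $\norm{\cdot}_{\Sigma M_i}$ only by an absolute factor, and that ``strictly convex/twice differentiable/$2$-concave'' are preserved. A secondary routine check is that $L_1^{2^nn!}$ as defined really is (isometrically) a subspace of $L_1[0,1]$, so that ``subspace of $L_1^{2^nn!}$'' delivers the intended conclusion about $L_1$; this is standard since any finite-dimensional $L_1(\mu)$ over a finite probability space sits isometrically in $L_1[0,1]$. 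Neither of these is an obstacle in substance — the corollary is essentially immediate once Theorem~\ref{THM hauptsatz 2} and Khintchine's inequality are in hand.
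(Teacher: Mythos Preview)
Your proposal is correct and follows essentially the same route as the paper: define the linear map $x\mapsto\bigl(\sum_i \epsilon_i x_i a_{i,\pi(i)}\bigr)_{\epsilon,\pi}$ into $L_1^{2^n n!}$, apply Khintchine's inequality to reduce the $L_1$-norm to the permutation $\ell_2$-average, and then invoke Theorem~\ref{THM hauptsatz 2}. Your additional remarks on the normalization $M_i^*(1)=1$ and on $L_1^{2^n n!}$ sitting in $L_1[0,1]$ are more careful than the paper's own proof, which leaves these points implicit.
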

\begin{proof}
  We define the embedding as follows:
    $$
      \Psi_n : \ell_{\Sigma M_i}^n \to L_1^{2^nn!}, (x_i)_{i=1}^n \mapsto \left( \sum_{i=1}^n x_i \epsilon_i a_{i,\pi(i)} \right)_{\epsilon,\pi},
    $$
  where the matrix $(a_{i,j})_{i,j=1}^n \in\R^{n\times n}$ is chosen according to (\ref{EQU wahl der matrix}).  Then, for all $x\in\R^n$,
    $$
      \norm{\Psi_n(x)}_{L_1^{2^nn!}} = \frac{1}{2^nn!} \sum_{\epsilon,\pi} \abs{\sum_{i=1}^n x_i \epsilon_i a_{i,\pi(i)}},
    $$
  and using Khintchine's inequality we obtain
    $$
     \frac{1}{\sqrt{2}} \AveP \left( \sum\limits_{i=1}^n\abs{x_ia_{i,\pi(i)}}^2 \right)^{\frac{1}{2}} \leq \norm{\Psi_n(x)}_{L_1^{2^nn!}} \leq \AveP \left( \sum\limits_{i=1}^n\abs{x_ia_{i,\pi(i)}}^2 \right)^{\frac{1}{2}}.
    $$
  By Theorem \ref{THM hauptsatz 2}
    $$
      \frac{c}{\sqrt{2}} \norm{x}_{\Sigma M_i} \leq \norm{\Psi_n(x)}_{L_1^{2^nn!}} \leq C \norm{x}_{\Sigma M_i}.
    $$       
\end{proof}
Therefore, the corollary says that the sequence of spaces $\ell_{\Sigma M_i}^n$, $n\in \N$, embeds uniformly into $L_1$. Taking $M_i(\cdot)=M(\alpha_i \cdot)$, $i=1,\ldots,n$ for an Orlicz function $M$ satisfying the conditions above and a weight sequence 
$(\alpha_i)_{i=1}^n\in\R^n$, we obtain the embedding of weighted Orlicz spaces into $L_1$, for instance, weighted $\ell_p$ spaces where $M(t)=t^p$, $1<p<2$. 


\bibliographystyle{amsplain}

\end{document}